\numberwithin{equation}{section}
\newtheorem{theorem}[equation]{Theorem}
\newtheorem{lemma}[equation]{Lemma}
\theoremstyle{definition}
\newtheorem{example}[equation]{Example}
\newcommand{\R}{{\mathbb R}}
\newcommand{\Z}{{\mathbb Z}}
\title[Convolutions of Tribonacci, Fuss--Catalan, and Motzkin sequences]{Convolutions of Tribonacci, Fuss--Catalan, \\ and Motzkin sequences}
\author{Daniel Birmajer}
\address{Department of Mathematics\\
                Nazareth College\\ 4245 East Ave. \\
                Rochester, NY 14618, U.S.A.}
\email{abirmaj6@naz.edu}
\author{Juan B. Gil}
\author{Michael D. Weiner}
\address{Penn State Altoona\\
               3000 Ivyside Park\\
               Altoona, PA 16601, U.S.A.}
\email{jgil@psu.edu}
\email{mdw8@psu.edu}
\begin{document}
\maketitle

\begin{abstract}
We introduce a class of sequences, defined by means of partial Bell polynomials, that contains a basis for the space of linear recurrence sequences with constant coefficients as well as other well-known sequences like Catalan and Motzkin. For the family of `Bell sequences' considered in this paper, we give a general multifold convolution formula and illustrate our result with a few explicit examples.  
\end{abstract}

\section{Introduction}
Given numbers $a$ and $b$, not both equal to zero, and given a sequence $c_1, c_2, \dots$, we consider the sequence $(y_n)$ given by
\begin{equation} \label{bell-transform}
 y_0=1, \quad
 y_n = \sum_{k=1}^{n} \binom{a n+b k}{k-1}\frac{(k-1)!}{n!} B_{n,k}(1!c_1,2!c_2,\dots) \text{ for } n\ge 1,
\end{equation}
where $B_{n,k}$ denotes the $(n,k)$-th partial Bell polynomial defined as
\begin{equation*}
  B_{n,k}(x_1,\dots,x_{n-k+1})=\sum_{\alpha\in\pi(n,k)} \frac{n!}{\alpha_1! \alpha_2!\cdots \alpha_{n-k+1}!}\big(\tfrac{x_1}{1!}\big)^{\alpha_1}\cdots \big(\tfrac{x_{n-k+1}}{(n-k+1)!}\big)^{\alpha_{n-k+1}}
\end{equation*}
with $\pi(n,k)$ denoting the set of multi-indices $\alpha\in{\mathbb N}_0^{n-k+1}$ such that $\alpha_1+\alpha_2+\cdots=k$ and $\alpha_1+2\alpha_2+3\alpha_3+\cdots=n$. For more about Bell polynomials, see e.g. \cite[Chapter~3]{Comtet}. In general, there is no need to impose any restriction on the entries $x_1, x_2, \dots$ other than being contained in a commutative ring. Here we are mainly interested in $\Z$ and $\Z[x]$.

The class of sequences \eqref{bell-transform} turns out to offer a unified structure to a wide collection of known sequences. For instance, with $a=0$ and $b=1$, any linear recurrence sequence with constant coefficients $c_1, c_2, \dots,c_d$, can be written as a linear combination of sequences of the form \eqref{bell-transform}. 
In fact, if $(a_n)$ is a recurrence sequence satisfying $a_n=c_1 a_{n-1}+c_2a_{n-2}+\cdots+c_da_{n-d}$ for $n\ge d$, then there are constants $\lambda_0, \lambda_1,\dots, \lambda_{d-1}$ (depending on the initial values of the sequence) such that $a_n= \lambda_0 y_n + \lambda_1 y_{n-1}+\cdots+\lambda_{d-1} y_{n-d+1}$ with
\begin{equation*}
 y_0=1, \quad y_n = \sum_{k=1}^n \frac{k!}{n!} B_{n,k}(1!c_1, 2!c_2, \dots) \text{ for } n\ge 1.
\end{equation*}
For more details about this way of representing linear recurrence sequences, cf. \cite{BGW14}. 

On the other hand, if $a=1$ and $b=0$, we obtain sequences like Catalan and Motzkin by making appropriate choices of $c_1$ and $c_2$, and by setting $c_j=0$ for $j\ge 3$. These and other concrete examples will be discussed in sections \ref{sec:Examples1} and \ref{sec:Examples2}.

In this paper, we focus on convolutions and will use known properties of the partial Bell polynomials to prove a multifold convolution formula for \eqref{bell-transform}.

%%%%%%%%%%%%%%%%%%%%%%%%%%%%%%%%%%%%%%%%%%%%%%%%%
\section{Convolution Formula}
\label{sec:Convolutions}

Our main result is the following formula.
\begin{theorem}\label{thm:r-convolution-general}
Let $y_0=1$ and for $n\ge 1$,
\begin{equation*}
 y_n = \sum_{k=1}^n \binom{a n+b k}{k-1}\frac{(k-1)!}{n!} B_{n,k}(1!c_1, 2!c_2, \dots).
\end{equation*}
For $r\ge 1$, we have
\begin{equation}\label{eqn:r-convolution}
\sum_{m_1+\dots+m_r=n} \!\! y_{m_1}\cdots y_{m_r} =r\sum_{k=1}^{n}\binom{a n+b k+r-1}{k-1} \frac{(k-1)!}{n!} B_{n,k}(1!c_1, 2!c_2, \dots).
\end{equation}
\end{theorem}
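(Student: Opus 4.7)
The plan is to derive the formula by identifying $Y(t) := \sum_{n \geq 0} y_{n} t^{n}$ as the unique formal power series with $Y(0) = 1$ satisfying the implicit functional equation
\[
Y \;=\; 1 + Y^{b}\,\phi\bigl(t Y^{a}\bigr), \qquad \phi(t) := \sum_{m \geq 1} c_{m} t^{m},
\]
and then computing $Y^{r}$ by Lagrange inversion. Introducing $u = t Y^{a}$ turns this into $Y = 1 + Y^{b}\phi(u)$, which presents $Y$ as a function $Y(u)$ of $u$ alone, namely the generalised binomial series $\mathcal{B}_{b}$ evaluated at $\phi(u)$. The crucial algebraic input is the classical power formula $Y(u)^{s} = \sum_{k \geq 0} \tfrac{s}{bk+s}\binom{bk+s}{k}\phi(u)^{k}$, valid for any exponent $s$.

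The main step is a change-of-variable residue computation. Using $t = u/Y(u)^{a}$ to re-express $dt$ and $t^{-n-1}$ in terms of $u$, substituting into $[t^{n}] Y(t)^{r} = \mathrm{Res}_{t}\, Y(t)^{r}\, t^{-n-1}\, dt$, and simplifying the two resulting terms by means of the identity $[u^{n-1}] f'(u) = n\,[u^{n}] f(u)$, one arrives at the Lagrange-type formula
\[
[t^{n}]\, Y(t)^{r} \;=\; \frac{r}{an+r}\,[u^{n}]\, Y(u)^{an+r}.
\]
Feeding in the $\mathcal{B}_{b}$ expansion with $s = an + r$ and extracting $[u^{n}]\phi(u)^{k} = (k!/n!)\,B_{n,k}(1!c_{1}, 2!c_{2},\dots)$ from the Bell polynomial EGF yields a sum over $k$ with prefactor $\tfrac{r}{bk+an+r}\binom{bk+an+r}{k}k!$; a one-line binomial manipulation rewrites this as $r\binom{an+bk+r-1}{k-1}(k-1)!$, giving exactly the right-hand side of \eqref{eqn:r-convolution}. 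Specialising $r = 1$ reduces this to the defining formula for $y_{n}$, which in turn verifies that $Y(t)$ really does satisfy the functional equation posited at the outset.

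I expect the main obstacle to be the residue step itself: because the substitution is $u = tY^{a}$ rather than the textbook $u = tH(u)$ with $H$ preassigned, the differential $dt$ naturally splits into two pieces, and one has to integrate by parts to merge them into a single $[u^{n}]$-extraction. After that bookkeeping is in place, the remainder is a chain of standard identities for partial Bell polynomials and for powers of the generalised binomial series.
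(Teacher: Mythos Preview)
Your approach is correct and genuinely different from the paper's. The paper argues by induction on $r$: after rewriting $y_n$ in the form $\sum_k \frac{1}{an+bk+1}\binom{an+bk+1}{k}\frac{k!}{n!}B_{n,k}$, it expands $\sum_m y_m\cdot[(r{-}1)\text{-fold convolution}]_{n-m}$ as a double sum over products $B_{m,\ell}B_{n-m,k-\ell}$ and collapses it using a Bell-polynomial convolution identity (Lemma~\ref{lem:basicConvolution}, quoted from an earlier paper of the same authors). No generating functions appear at all. Your argument instead characterises $Y(t)$ implicitly by $Y = 1 + Y^b\phi(tY^a)$, performs a Lagrange-type change of variable $u = tY^a$ to obtain $[t^n]Y^r = \frac{r}{an+r}[u^n]\tilde Y(u)^{an+r}$, and then reads off coefficients via Lambert's power formula for $\mathcal{B}_b$. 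This is more conceptual---it explains the shape of the answer as Lagrange inversion composed with the generalised binomial series, and it treats all $r$ at once rather than inductively---whereas the paper's route stays entirely within Bell-polynomial algebra, at the price of importing a fairly specialised external lemma. One small point you should make explicit: the integration-by-parts step $\tilde Y^{an+r-1}\tilde Y' = \frac{1}{an+r}(\tilde Y^{an+r})'$ and the resulting factor $\frac{r}{an+r}$ require $an+r\neq 0$, so carry out the computation for, say, $a\ge 0$ and then extend to all real $a,b$ by the same polynomial-identity argument the paper invokes at the end of its own proof.
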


In order to prove this theorem, we recall a convolution formula for partial Bell polynomials that was given by the authors in \cite[Section~3, Corollary~11]{BGW12}.
\begin{lemma}\label{lem:basicConvolution}
Let $\alpha(\ell,m)$ be a linear polynomial in $\ell$ and $m$. For any $\tau\not=0$, we have
\begin{equation*}
\sum_{\ell=0}^{k}\sum_{m=\ell}^n \frac{\binom{\alpha(\ell,m)}{k-\ell}\binom{\tau-\alpha(\ell,m)}{\ell}\binom{n}{m}}{\alpha(\ell,m)(\tau-\alpha(\ell,m))\binom{k}{\ell}} B_{m,\ell} B_{n-m,k-\ell} = \frac{\tau-\alpha(0,0)+\alpha(k,n)}{\tau \alpha(k,n)(\tau-\alpha(0,0))}\binom{\tau}{k} B_{n,k}.
\end{equation*}
\end{lemma}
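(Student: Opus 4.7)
The plan is to split the identity via partial fractions and then prove each piece by a generating-function / Lagrange--B\"urmann argument. Setting $A = \alpha(0,0)$ and $B = \alpha(k,n)$, the elementary decompositions
\[
\frac{1}{\alpha(\tau - \alpha)} = \frac{1}{\tau \alpha} + \frac{1}{\tau(\tau - \alpha)} \quad\text{and}\quad \frac{\tau - A + B}{\tau B(\tau - A)} = \frac{1}{\tau B} + \frac{1}{\tau(\tau - A)}
\]
exhibit the claim as a sum of two identities that are exchanged by the involution $(\alpha,\ell,m) \mapsto (\tau-\alpha, k-\ell, n-m)$. It therefore suffices to prove the single reduced identity
\[
\sum_{\ell=0}^{k}\sum_{m=\ell}^n \frac{\binom{\alpha(\ell,m)}{k-\ell}\binom{\tau - \alpha(\ell,m)}{\ell}\binom{n}{m}}{\alpha(\ell,m)\binom{k}{\ell}} B_{m,\ell}\, B_{n-m,k-\ell} = \frac{\binom{\tau}{k}}{\alpha(k,n)} B_{n,k}.
\]

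For this reduced identity, I would pass to exponential generating functions via $\sum_{n \geq k} B_{n,k}(x_1, x_2, \dots)\, t^n/n! = \phi(t)^k/k!$, where $\phi(t) = \sum_{i \geq 1} x_i\, t^i/i!$. The unweighted convolution $\sum_m \binom{n}{m} B_{m,\ell} B_{n-m,k-\ell} = \binom{k}{\ell} B_{n,k}$ is then immediate from $\phi^\ell \cdot \phi^{k-\ell} = \phi^k$. To install the weights, I use the linearity of $\alpha$ to write $\alpha(\ell,m) = \alpha_0 + a\ell + bm$ and encode the binomials as residues
\[
\binom{\alpha(\ell,m)}{k-\ell} = [z^{k-\ell}]\,(1+z)^{\alpha_0}\,[(1+z)^a]^\ell\,[(1+z)^b]^m,
\]
together with an analogous $w$-residue for $\binom{\tau - \alpha(\ell,m)}{\ell}$. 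The $\ell$- and $m$-dependence then factors cleanly: the $m$-factor gets absorbed into $B_{m,\ell}$ via the homogeneity $B_{m,\ell}(\lambda x_1, \lambda^2 x_2, \dots) = \lambda^m B_{m,\ell}$, and the $\ell$-factor via $B_{m,\ell}(\mu x_1, \mu x_2, \dots) = \mu^\ell B_{m,\ell}$, leaving a contour integral in $z$ and $w$ of a closed-form generating-function expression.

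The remaining step is to interpret the $1/\alpha(\ell,m)$ denominator as the Jacobian weight in a Lagrange--B\"urmann expansion applied to $\phi$, and then apply the Vandermonde--Chu identity $\binom{\tau}{k} = \sum_j \binom{\alpha_0}{k-j}\binom{\tau - \alpha_0}{j}$ to the residual binomial sum, which collapses everything to $\binom{\tau}{k} B_{n,k}/\alpha(k,n)$. The hardest part will be the bookkeeping of the $1/\alpha$ factor: the rewriting $\binom{\alpha}{k-\ell}/\alpha = \binom{\alpha-1}{k-\ell-1}/(k-\ell)$ works cleanly for $k > \ell$, but the boundary term $\ell = k$ must be treated separately, and one must be careful that all residue manipulations remain formal. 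Since the identity coincides with Corollary~11 of \cite{BGW12} by the same authors, one could alternatively just invoke that result directly.
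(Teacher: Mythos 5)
The paper itself does not prove this lemma: it is imported verbatim from \cite[Section~3, Corollary~11]{BGW12}, so the one fully rigorous route you offer at the very end --- ``just invoke that result directly'' --- is exactly what the authors do. Judged as a self-contained proof, your proposal contains a correct reduction followed by a genuine gap. The reduction is fine: both partial-fraction identities check out, and the substitution $(\ell,m)\mapsto(k-\ell,n-m)$ does carry the $\tfrac{1}{\tau(\tau-\alpha)}$ piece of the left side onto the $\tfrac{1}{\tau\alpha}$ piece for the new linear form $\tilde\alpha(\ell,m)=\tau-\alpha(k-\ell,n-m)$, whose value at $(k,n)$ is $\tau-\alpha(0,0)$; this matches the two terms of the decomposed right-hand side, so it genuinely suffices to prove the reduced identity with the single denominator $\alpha(\ell,m)$. (This symmetrization is essentially how Corollary~11 is assembled in \cite{BGW12} from the more basic one-denominator identity there.)

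The gap is that the reduced identity, which carries all the content of the lemma, is never actually proved. Your generating-function paragraph establishes only the unweighted convolution $\sum_m\binom{n}{m}B_{m,\ell}B_{n-m,k-\ell}=\binom{k}{\ell}B_{n,k}$ and explains how homogeneity inserts factors of the form $\mu^{\ell}\lambda^{m}$; together with the residue encoding this handles the binomial \emph{numerators}, but not the denominator. The sentence proposing to ``interpret the $1/\alpha(\ell,m)$ denominator as the Jacobian weight in a Lagrange--B\"urmann expansion'' is a hope, not an argument: Lagrange inversion attaches weights such as $k/n$ or $1/(n+1)$ to the \emph{outer} indices of $\phi(t)^k/k!$, not the reciprocal of an arbitrary affine form $\alpha_0+a\ell+bm$ in both \emph{summation} indices inside a double sum. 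Your own fallback rewriting $\binom{\alpha}{k-\ell}/\alpha=\binom{\alpha-1}{k-\ell-1}/(k-\ell)$ trades $1/\alpha$ for $1/(k-\ell)$, which is again not what Lagrange inversion supplies, and leaves the $\ell=k$ boundary term unresolved. Until that central step is carried out --- for instance along the lines of \cite{BGW12}, working from the explicit partition-sum formula for $B_{n,k}$ and a Vandermonde/Hagen--Rothe type convolution --- the argument is a plan rather than a proof; within the present paper the honest move is simply the citation.
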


This formula is key for proving Theorem~\ref{thm:r-convolution-general}. For illustration purposes, we start by proving the special case of a simple convolution (i.e. $r=2$).
\begin{lemma}
The sequence $(y_n)$ defined by \eqref{bell-transform} satisfies 
\[ \sum_{m=0}^{n} y_{m}\,y_{n-m} 
  = 2\sum_{k=1}^{n}\binom{an+b k+1}{k-1} \frac{(k-1)!}{n!} B_{n,k}(1!c_1, 2!c_2, \dots). \]
\end{lemma}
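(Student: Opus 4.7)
The plan is to reduce the statement to Lemma~\ref{lem:basicConvolution} after separating the trivial boundary terms from the convolution. Since $y_0=1$, I would first write
\[
\sum_{m=0}^{n} y_m y_{n-m}=2y_n+\sum_{m=1}^{n-1} y_m y_{n-m},
\]
and expand each factor of the interior sum by means of \eqref{bell-transform}. Re-indexing via $k=\ell+j$, where $\ell$ and $j$ are the second indices of $B_{m,\ell}$ and $B_{n-m,j}$, and interchanging sums, the interior convolution becomes $\sum_{k=2}^{n}$ of a double sum over $1\le\ell\le k-1$ and $\ell\le m\le n-k+\ell$ whose summand involves $\binom{am+b\ell}{\ell-1}$, $\binom{a(n-m)+b(k-\ell)}{k-\ell-1}$, factorials, and the product $B_{m,\ell}B_{n-m,k-\ell}$.

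The heart of the argument is choosing the free parameters of Lemma~\ref{lem:basicConvolution} so that its summand matches this one up to a scalar. Using the identity $\binom{x}{j}/x=\binom{x-1}{j-1}/j$, the choice
\[
\alpha(\ell,m)=a(n-m)+b(k-\ell)+1,\qquad \tau=an+bk+2
\]
(both linear in $\ell$ and $m$, as required) yields
\[
\frac{\binom{\alpha}{k-\ell}\binom{\tau-\alpha}{\ell}\binom{n}{m}}{\alpha(\tau-\alpha)\binom{k}{\ell}}=\frac{n!}{k!}\binom{am+b\ell}{\ell-1}\binom{a(n-m)+b(k-\ell)}{k-\ell-1}\frac{(\ell-1)!(k-\ell-1)!}{m!(n-m)!},
\]
so the lemma's summand equals $n!/k!$ times the summand of $y_m y_{n-m}$ (without the Bell factors). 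With these parameters one has $\alpha(0,0)=an+bk+1$, $\alpha(k,n)=1$, and $\tau-\alpha(0,0)=1$, so the right-hand side of the lemma collapses to $\frac{2}{k}\binom{an+bk+1}{k-1}B_{n,k}$.

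The last piece is boundary bookkeeping. The lemma's sum runs over $0\le\ell\le k$ and $\ell\le m\le n$, but since $B_{0,0}=1$ and $B_{0,j}=0$ for $j\ge 1$, only two endpoints lie outside the interior, namely $(\ell,m)=(0,0)$ and $(\ell,m)=(k,n)$. A direct evaluation shows each contributes $\frac{1}{k}\binom{an+bk}{k-1}B_{n,k}$. Subtracting these from the full lemma sum and reversing the $k!/n!$ scaling gives
\[
\sum_{m=1}^{n-1}y_m y_{n-m}=\sum_{k=2}^{n}\frac{2(k-1)!}{n!}\binom{an+bk}{k-2}B_{n,k}.
\]
Combining with $2y_n=\sum_{k=1}^{n}\frac{2(k-1)!}{n!}\binom{an+bk}{k-1}B_{n,k}$ and applying Pascal's rule $\binom{an+bk}{k-1}+\binom{an+bk}{k-2}=\binom{an+bk+1}{k-1}$ produces the stated identity. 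The main obstacle is the apparent mismatch between the binomials $\binom{\cdot}{\ell-1}$, $\binom{\cdot}{k-\ell-1}$ appearing in \eqref{bell-transform} and the $\binom{\cdot}{k-\ell}$, $\binom{\cdot}{\ell}$ appearing in Lemma~\ref{lem:basicConvolution}; resolving this mismatch is precisely what dictates the specific choice of $\alpha$ and $\tau$ above.
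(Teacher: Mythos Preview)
Your argument is correct and uses the same core idea as the paper: apply Lemma~\ref{lem:basicConvolution} with $\alpha(\ell,m)=a(n-m)+b(k-\ell)+1$ and $\tau=an+bk+2$. The organizational difference is that the paper first rewrites
\[
y_n=\sum_{k=0}^{n}\frac{1}{an+bk+1}\binom{an+bk+1}{k}\frac{k!}{n!}B_{n,k},
\]
which absorbs the $k=0$ term (hence $y_0=1$) and puts the binomials directly into the shape $\binom{\cdot}{\ell}$, $\binom{\cdot}{k-\ell}$ required by Lemma~\ref{lem:basicConvolution}; the convolution then matches the lemma's sum on the nose with no boundary extraction and no Pascal recombination. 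Your route reaches the same destination but with the extra bookkeeping of peeling off $2y_n$, evaluating the two endpoint contributions $(\ell,m)=(0,0),(k,n)$, and then reassembling via Pascal's rule. One small omission: both approaches implicitly divide by quantities like $an+bk+2$ or $am+b\ell+1$, so you should note (as the paper does) that the identity first holds for, say, $a,b\ge 0$ and then extends to all $a,b$ by the polynomial-identity argument.
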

\begin{proof}  We begin by assuming $a, b\ge 0$. For $n\ge0$ we can rewrite $y_n$ as
\begin{equation}\label{yn_rewritten} 
y_n = \sum_{k=0}^{n} \frac1{an+bk + 1}\binom{an+bk+1}{k}\frac{k!}{n!} B_{n,k}(1!c_1, 2!c_2, \dots). 
\end{equation}
By definition,
\begin{align*}
 \sum_{m=0}^{n} & y_{m}\,y_{n-m} \\
 &= \sum_{m=0}^{n} \Bigg[\sum_{\ell=0}^{m} \tfrac1{a m+b \ell+1}\tbinom{a m+ b \ell+1}{\ell}\tfrac{\ell!}{m!} B_{m,\ell}\Bigg]\! \Bigg[\sum_{j=0}^{n-m} \tfrac1{a (n-m)+b j+1}\tbinom{a (n-m)+ b j+1}{j}\tfrac{j!}{(n-m)!} B_{n-m,j}\Bigg] \\
 &= \sum_{m=0}^{n} \sum_{k=0}^{n} \sum_{\ell=0}^{k}\frac{\binom{a m+ b \ell+1}{\ell}\binom{a (n-m)+ b (k-\ell)+1}{k-\ell}}{(a m+b \ell+1)(a (n-m)+b (k-\ell)+1)}\frac{\ell!}{m!} \frac{(k-\ell)!}{(n-m)!} B_{m,\ell}B_{n-m,k-\ell} \\
 &= \sum_{k=0}^{n} \frac{k!}{n!}\sum_{\ell=0}^{k} \sum_{m=\ell}^{n} \frac{\binom{a (n-m)+ b (k-\ell)+1}{k-\ell}\binom{am+b\ell+1}{\ell} \binom{n}{m}}{(am+b\ell+1)(a(n-m)+b(k-\ell)+1) \binom{k}{\ell}} B_{m,\ell}B_{n-m,k-\ell}
\end{align*}
\begin{align*}
 &= \sum_{k=0}^{n} \frac{k!}{n!}\Bigg[ \sum_{\ell=0}^{k} \sum_{m=\ell}^{n} \frac{\binom{\alpha(\ell,m)}{k-\ell}\binom{\tau - \alpha(\ell,m)}{\ell}\binom{n}{m}}{(\tau-\alpha(\ell,m))\alpha(\ell,m)\binom{k}{\ell}} B_{m,\ell}B_{n-m,k-\ell}\Bigg]
\end{align*}
with $\alpha(\ell,m)=a(n-m)+b(k-\ell)+1$ and $\tau=an + bk +2$. 

Thus, by Lemma~\ref{lem:basicConvolution},
\begin{align*}
 \sum_{m=0}^{n} y_{m}\,y_{n-m} 
 &= \sum_{k=0}^{n} \frac{k!}{n!}\left[\frac{\tau-\alpha(0,0)+\alpha(k,n)}{\tau \alpha(k,n)(\tau-\alpha(0,0))}\binom{\tau}{k} B_{n,k}(1!c_1, 2!c_2, \dots)\right] \\
 &= \sum_{k=0}^{n} \frac{k!}{n!}\left[\frac{2}{(an+bk+2)}\binom{an+bk +2}{k} B_{n,k}(1!c_1, 2!c_2, \dots)\right] \\
&=2\sum_{k=0}^{n}\binom{an+bk+1}{k-1} \frac{(k-1)!}{n!} B_{n,k}(1!c_1, 2!c_2, \dots).
\end{align*}
For any fixed $n$, both sides of the claimed equation are polynomials in $a$ and $b$. Since they coinside on an open subset of $\R^2$, they must coincide for all real numbers $a$ and $b$.
\end{proof}

\begin{proof}[\bf Proof of Theorem~\ref{thm:r-convolution-general}]
We proceed by induction in $r$. The case $r=2$ was discussed in the previous lemma. Assume the formula \eqref{eqn:r-convolution} holds for products of length less than $r>2$.  

As before, we temporarily assume that both $a$ and $b$ are positive. For $n\ge0$ we rewrite
\[ \sum_{m_1+\dots+m_{r-1}=n} \!\! y_{m_1}\cdots y_{m_{r-1}} =\sum_{k=0}^{n}\frac{r-1}{a n+b k+r-1}\tbinom{a n+b k+r-1}{k} \frac{k!}{n!} B_{n,k}(1!c_1, 2!c_2, \dots). \]
Thus
\begin{align*}
 \sum_{m_1+\dots+m_r=n} \!\! y_{m_1}\cdots y_{m_r} 
 &= \sum_{m=0}^n y_m \!\! \sum_{m_1+\dots+m_{r-1}=n-m} \!\! y_{m_1}\cdots y_{m_{r-1}} \\
 &= \sum_{m=0}^n y_m \sum_{j=0}^{n-m}\tfrac{r-1}{a(n-m)+bj+r-1}\tbinom{a(n-m)+bj+r-1}{j} \tfrac{j!}{(n-m)!} B_{n-m,j}.
\end{align*}
Writing $y_m$ as in \eqref{yn_rewritten}, we then get
\begin{align*}
  \frac{1}{r-1} &\sum_{m_1+\dots+m_r=n} \!\! y_{m_1}\cdots y_{m_r} \\
  &= \sum_{m=0}^n \Bigg[\sum_{\ell=0}^{m} \frac{\tbinom{am+b\ell+1}{\ell} \ell!}{(am+b\ell + 1)m!} B_{m,\ell}\Bigg] \! \Bigg[\sum_{j=0}^{n-m}\frac{\tbinom{a(n-m)+bj+r-1}{j}j!}{(a(n-m)+bj+r-1)(n-m)!} B_{n-m,j}\Bigg] \\ 
  &= \sum_{m=0}^n \sum_{k=0}^n \sum_{\ell=0}^{k} \frac{\binom{a (n-m)+b (k-\ell)+r-1)}{k-\ell}\binom{a m+b \ell+1}{\ell}}{(a m+b \ell + 1)(a (n-m)+b (k-\ell)+r-1)}\frac{\ell!}{m!} \frac{(k-\ell)!}{(n-m)!} B_{m,\ell}B_{n-m,k-\ell}\\
 &= \sum_{k=0}^{n} \frac{k!}{n!}\Bigg[ \sum_{\ell=0}^{k} \sum_{m=\ell}^{n} \frac{\binom{\alpha(\ell,m)}{k-\ell}\binom{\tau - \alpha(\ell,m)}{\ell}\binom{n}{m}}{(\tau-\alpha(\ell,m))\alpha(\ell,m)\binom{k}{\ell}} B_{m,\ell}B_{n-m,k-\ell}\Bigg]
\end{align*}
with $\alpha(\ell,m)=a(n-m)+b(k-\ell)+r-1$ and $\tau=an + bk +r$. 

Finally, by Lemma~\ref{lem:basicConvolution},
\begin{align*}
 \sum_{m_1+\dots+m_r=n} \!\! y_{m_1}\cdots y_{m_r}
 &= (r-1)\sum_{k=0}^{n} \frac{k!}{n!}\bigg[\frac{\tau-\alpha(0,0)+\alpha(k,n)}{\tau \alpha(k,n)(\tau-\alpha(0,0))}\binom{\tau}{k} B_{n,k}(1!c_1, 2!c_2, \dots)\bigg] \\
 &= (r-1)\sum_{k=0}^{n} \frac{k!}{n!}\bigg[\frac{r\tbinom{an+bk +r}{k}}{(an+bk+r)(r-1)} B_{n,k}(1!c_1, 2!c_2, \dots)\bigg] \\
&=r\sum_{k=0}^{n}\binom{an+bk+r-1}{k-1} \frac{(k-1)!}{n!} B_{n,k}(1!c_1, 2!c_2, \dots).
\end{align*}
As in the previous lemma, this equation actually holds for all $a,b\in\R$ as claimed.
\end{proof}

%%%%%%%%%%%%%%%%%%%%%%%%%%%%%%%%%%%%%%%%%%%%%%%%%
\section{Examples: Fibonacci, Tribonacci, Jacobsthal}
\label{sec:Examples1}

As mentioned in the introduction, sequences of the form \eqref{bell-transform} with $a=0$ and $b=1$ can be used to describe linear recurrence sequences with constant coefficients.  In this case, \eqref{bell-transform} takes the form
\begin{equation} \label{bell-transform01}
  y_n = \sum_{k=0}^n \frac{k!}{n!} B_{n,k}(1!c_1, 2!c_2, \dots) \text{ for } n\ge 0,
\end{equation}
and the convolution formula \eqref{eqn:r-convolution} turns into
\begin{align*}
\sum_{m_1+\dots+m_r=n} \!\! y_{m_1}\cdots y_{m_r} 
 &= r\sum_{k=1}^{n}\binom{k+r-1}{k-1} \frac{(k-1)!}{n!} B_{n,k}(1!c_1, 2!c_2, \dots) \\
 &= \sum_{k=1}^{n}\binom{k+r-1}{k} \frac{k!}{n!} B_{n,k}(1!c_1, 2!c_2, \dots).
\end{align*}
One can obtain (with a similar proof) the more general formula
\begin{equation*}
\sum_{m_1+\dots+m_r=n} \!\! y_{m_1-\delta}\cdots y_{m_r-\delta}
  =\sum_{k=0}^{n-\delta r} \binom{k+r-1}{k}\frac{k!}{(n-\delta r)!} B_{n-\delta r,k}(1!c_1, 2!c_2, \dots)
\end{equation*}
for any integer $\delta\ge 0$, assuming $y_{-1}=y_{-2}=\cdots=y_{-\delta}=0$.

\begin{example}[Fibonacci]
Consider the sequence defined by
\begin{gather*}
  f_{0}=0, \;\; f_{1}=1, \text{ and }\; f_{n}=f_{n-1}+f_{n-2} \;\text{ for } n\ge 2.
\end{gather*}
Choosing $c_1=c_2=1$ and $c_j=0$ for $j\ge 3$ in \eqref{bell-transform01}, for $n\ge 1$ we have 
\begin{equation*}
  f_{n} = y_{n-1} = \sum_{k=0}^{n-1} \frac{k!}{(n-1)!} B_{n-1,k}(1, 2, 0, \dotsc)= \sum_{k=0}^{n-1} \binom{k}{n-1-k},
\end{equation*}
and
\begin{equation*}
\sum_{m_1+\dots+m_r=n} \!\! f_{m_1}\cdots f_{m_r}= \sum_{k=0}^{n- r} \binom{k+r-1}{k}\binom{k}{n-r-k}.
\end{equation*}
\end{example}
\begin{example}[Tribonacci]% A000073
Let $(t_n)$ be the sequence defined by
\begin{equation*}
  t_0=t_1=0,\;\; t_2=1, \text{ and }\; t_n=t_{n-1}+t_{n-2}+t_{n-3} \text{ for } n\ge 3.
\end{equation*}
Choosing $c_1=c_2=c_3=1$ and $c_j=0$ for $j\ge 4$ in \eqref{bell-transform01}, for $n\ge 2$ we have 
\begin{equation*}
 t_n= y_{n-2} = \sum_{k=0}^{n-2} \frac{k!}{(n-2)!} B_{n-2,k}(1!,2!,3!,0,\dots),
\end{equation*}
and since $B_{n,k}(1!,2!,3!,0,\dots)=\frac{n!}{k!}\sum_{\ell=0}^k \binom{k}{k-\ell} \binom{k-\ell}{n+\ell-2k} = \frac{n!}{k!}\sum_{\ell=0}^k \binom{k}{\ell} \binom{\ell}{n-k-\ell}$, we get
\begin{equation*}
 t_n =\sum_{k=0}^{n-2} \sum_{\ell=0}^k \binom{k}{\ell} \binom{\ell}{n-2-k-\ell},
\end{equation*}
and
\begin{equation*}
\sum_{m_1+\dots+m_r=n} \!\! t_{m_1}\cdots t_{m_r}
  =\sum_{k=0}^{n-2r}\sum_{\ell=0}^k \binom{k+r-1}{k} \binom{k}{\ell} \binom{\ell}{n-2r-k-\ell}.
\end{equation*}
\end{example}
\begin{example}[Jacobsthal]
The Jacobsthal polynomials are obtained by the recurrence 
\begin{gather*}
 J_0=0, \;\; J_1=1, \text{ and} \\
 J_n=J_{n-1}+2x J_{n-2} \;\text{ for } n\ge 2.
\end{gather*}
Choosing $c_1=1$, $c_2=2x$, and $c_j=0$ for $j\ge 3$ in \eqref{bell-transform01}, for $n\ge 1$ we get
\begin{equation*}
J_n = y_{n-1} = \sum_{k=0}^{n-1} \frac{k!}{(n-1)!} B_{n-1,k}(1,2(2x), 0, \dotsc)
= \sum_{k=0}^{n-1} \binom{k}{n-1-k}(2x)^{n-1-k},
\end{equation*}
and
\begin{align*}
\sum_{m_1+\dots+m_r=n} \!\! J_{m_1}\cdots J_{m_r} 
 &=\sum_{k=0}^{n-r} \frac{k!}{(n-r)!} \binom{k+r-1}{k}B_{n-r,k}(1,4x,0,\dots)\\
 &=\sum_{k=0}^{n-r} \binom{k+r-1}{k} \binom{k}{n-r-k} (2x)^{n-r-k}.
\end{align*}
\end{example}

%%%%%%%%%%%%%%%%%%%%%%%%%%%%%%%%%%%%%%%%%%%%%%%%%
\section{Examples: Fuss--Catalan, Motzkin}
\label{sec:Examples2}

All of the previous examples are related to the family \eqref{bell-transform01}. However, there are many other cases of interest. For example, let us consider the case when $a=1$, $b=0$, and $c_j=0$ for $j\ge 3$. Since $B_{n,k}(c_1,2c_2,0,\dots)=\frac{n!}{k!} \binom{k}{n-k} c_1^{2k-n}c_2^{n-k}$, the family \eqref{bell-transform} can be written as
\begin{equation} \label{bell-transform10}
 y_0=1, \quad
 y_n = \sum_{k=1}^{n} \frac1k \binom{n}{k-1} \binom{k}{n-k} c_1^{2k-n}c_2^{n-k} \text{ for } n\ge 1,
\end{equation}
and the convolution formula \eqref{eqn:r-convolution} becomes
\begin{equation} \label{eq:convolution10}
\sum_{m_1+\dots+m_r=n} \!\! y_{m_1}\cdots y_{m_r} =\sum_{k=1}^{n} \frac{r}{k} \binom{n+r-1}{k-1} \binom{k}{n-k} c_1^{2k-n}c_2^{n-k}.
\end{equation}
\begin{example}[Catalan] 
If we let $c_1=2$ and $c_2=1$ in \eqref{bell-transform10}, for $n\ge 1$ we get
\begin{align*}
 y_n &= \sum_{k=1}^{n} \frac1k \binom{n}{k-1} \binom{k}{n-k} 2^{2k-n} \\
 &= \frac{1}{n+1}\sum_{k=1}^{n} \binom{n+1}{k} \binom{k}{n-k} 2^{2k-n} \\
 &= \frac{1}{n+1} \binom{2(n+1)}{n} = \frac{1}{n+2} \binom{2(n+1)}{n+1} = C_{n+1}.
\end{align*}
Here we used the identity 
\begin{equation} \label{eq:Gould3.22}
 \sum_{k=\lceil\frac{n}{2}\rceil}^{n} \binom{x}{k} \binom{k}{n-k} 2^{2k} = 2^n\binom{2x}{n} 
\end{equation}
from Gould's collection \cite[Identity (3.22)]{Gould}.  As for convolutions, \eqref{eq:convolution10} leads to 
\begin{align*}
\sum_{m_1+\dots+m_r=n} \!\! C_{m_1+1}\cdots C_{m_r+1} 
 &=\sum_{k=1}^{n} \frac{r}{k} \binom{n+r-1}{k-1} \binom{k}{n-k} 2^{2k-n} \\
 &=\frac{r}{n+r}\sum_{k=1}^{n} \binom{n+r}{k} \binom{k}{n-k} 2^{2k-n}.
\end{align*}
Using again \eqref{eq:Gould3.22}, we arrive at the identity
\begin{equation*}
\sum_{m_1+\dots+m_r=n} \!\! C_{m_1+1}\cdots C_{m_r+1} = \frac{r}{n+r} \binom{2(n+r)}{n}.
\end{equation*}
\end{example}
\begin{example}[Motzkin]
Let us now consider \eqref{bell-transform10} with $c_1=1$ and $c_2=1$. For $n\ge 1$,
\begin{equation*}  
 y_n = \sum_{k=1}^{n} \frac1k \binom{n}{k-1} \binom{k}{n-k} 
 = \frac{1}{n+1}\sum_{k=1}^{n} \binom{n+1}{k} \binom{k}{n-k}.
\end{equation*}
These are the Motzkin numbers $M_n$. Moreover,
\begin{equation*}
\sum_{m_1+\dots+m_r=n} \!\! M_{m_1}\cdots M_{m_r} =\frac{r}{n+r}\sum_{k=0}^{n} \binom{n+r}{k} \binom{k}{n-k}.
\end{equation*}
\end{example}

\medskip
We finish this section by considering the sequence (with $b\not=0$):
\begin{equation*}
 y_0=1, \quad
 y_n = \sum_{k=1}^n \binom{b k}{k-1}\frac{(k-1)!}{n!} B_{n,k}(1!c_1, 2!c_2, \dots) \text{ for } n\ge 1.
\end{equation*}
\begin{example}[Fuss--Catalan]
If $c_1=1$ and $c_j=0$ for $j\ge 2$, then the above sequence becomes
\begin{equation*}
 y_0=1, \quad y_n = \binom{b n}{n-1}\frac{(n-1)!}{n!} = \frac{1}{(b-1)n+1} \binom{b n}{n}.
\end{equation*}
Denoting $C^{(b)}_n=y_n$, and since $r\binom{bn+r-1}{n-1} \frac{(n-1)!}{n!}=\frac{r}{bn+r}\binom{bn+r}{n}$, we get the identity
\begin{equation*}
\sum_{m_1+\dots+m_r=n} \!\! C^{(b)}_{m_1}\cdots C^{(b)}_{m_r} =\frac{r}{bn+r}\binom{bn+r}{n}. 
\end{equation*}
\end{example}

%%%%%%%%%%%%%%%%%%%%%%%%%%%%%%%%%%%%%%%%%%%%%%%%%

\medskip

\noindent MSC2010: 11B37, 05A19

\end{document}